\documentclass[11pt]{article}
\usepackage[margin=1.1in]{geometry}
\usepackage{amsmath,amssymb,amsthm}
\usepackage{booktabs}
\usepackage[hidelinks]{hyperref}

\newtheorem{theorem}{Theorem}[section]
\newtheorem{proposition}[theorem]{Proposition}

\newtheorem{corollary}[theorem]{Corollary}
\newtheorem{remark}[theorem]{Remark}
\newtheorem{problem}[theorem]{Problem}
\newtheorem{definition}[theorem]{Definition}

\newcommand{\ZZ}{\mathbb{Z}}
\newcommand{\RR}{\mathbb{R}}
\newcommand{\thst}{{*\theta}}
\newcommand{\hgt}{\mathrm{height}}
\newcommand{\supp}{\mathrm{supp}}

\title{A compactness theorem for twisted-unitary elements\\
of integral group rings, with a certified route to the\\
$\theta$-unitary Case~A at window $B(4)$ of the Promislow group}
\author{Moe Tabei}
\date{August 2026}

\begin{document}
\maketitle

\begin{abstract}
Let $G$ be a torsion-free group whose real group algebra $\RR[G]$ has no
zero divisors, and let $u\mapsto u^{\thst}$ be an $\ell^2$-isometric
anti-involution of $\RR[G]$ (a composition of the inversion involution
with a ring automorphism and a sign character).  We prove a compactness
theorem: for every finite, $\thst$-closed support window $W$ the
constant
$\mu^*(W)=\min\{\|w^{\thst}w\|_2 : \|w\|_2=1,\ \supp(w)\subseteq W\}$
is strictly positive, and every real $\theta$-unitary element
($u^{\thst}u=1$) supported in $W$ satisfies
$\|u\|_2\le \mu^*(W)^{-1/2}$.  In particular the integer
$\theta$-unitary elements supported in $W$ form a finite, effectively
enumerable set: the a~priori infinite ``height'' direction of the unit
search collapses to a single real constant.  For the Promislow
(Hantzsche--Wendt) group $P$ and the window $B(4)$ that hosts Gardam's
counterexample to the unit conjecture over $\mathbb{F}_2$, we combine
this with exact SAT-certified height ladders (heights $\le 31$
per stratum, DRAT-certified master cell at larger heights), a
radius-free depth--tail theorem, and a mirror symmetry between the
$\pm$ strata, reducing the vanishing of all nontrivial $\theta$-unitary
units $u\equiv\pm1\pmod 2$ with $\supp(u)\subseteq B(4)$ to a single
certified lower bound on $\mu^*(B(4))$.  We report numerical estimates
$\mu^*(B(4))\approx 1.4\times10^{-3}$, well above the required
threshold, and prove three structural results about the remaining
certification problem: no linear (Cauchy--Schwarz) dual certificate
exists, because $P$ carries $\theta$-anti-unitary trivial elements;
and, numerically, the level-2 sum-of-squares relaxation is
boundary-pinned with an explicit slope, both in the direct and in the
ideal-multiplier formulation --- the obstruction being a spurious
pseudo-moment that no measure can realize.  We also contrast the
mechanism with $\ZZ[D_\infty]$, where torsion produces zero divisors,
$\mu^*=0$, and genuinely unbounded unipotent families of twisted
unitaries.
\end{abstract}

\section{Introduction}

Gardam's disproof of the Kaplansky unit conjecture over $\mathbb{F}_2$
\cite{Gardam} placed the integral case at the centre of attention:
does $\ZZ[P]$, for $P$ the Promislow group \cite{Promislow1988},
contain nontrivial units?  The $\theta$-unitary sector --- units with
$u^{-1}=u^{\thst}$ for the twisted involution $\thst$, the sector
identified by Bartholdi \cite{Bartholdi2022} as containing Gardam's
and Murray's units --- is the natural first battleground: it contains
the mod-$2$ shadow of Gardam's unit, and it is closed under the
constraints that make finite-window searches meaningful.

Within this sector, companion work (by machine-assisted census and
DRAT-certified SAT) has closed the ``Case B'' classes at the window
$B(4)$ \cite{paper4,paper6}; what remains is \emph{Case A}: units
congruent to $\pm 1$ modulo $2$.  Case A has resisted because its search space is infinite
in a specific direction: the \emph{height} (the maximal coefficient
magnitude) of a candidate unit is a priori unbounded, and congruence
towers modulo $2^K$ show solution families whose minimal height grows
without bound but never vanishes.  All previous unconditional results
were height-bounded ladder theorems.

This paper removes the height infinity.  The main observation is a
three-line compactness argument that quantifies, over a finite window,
the absence of zero divisors in $\RR[P]$.

\subsection*{Main results}

Throughout, $G$ is a group such that $\RR[G]$ is a domain (by
Kropholler--Linnell--Moody \cite{KLM} this holds for all torsion-free
elementary amenable groups, in particular for $P$), and
$x\mapsto x^{\thst}$ is an anti-automorphism of $\RR[G]$ of order two
which permutes the coefficient axes up to sign: $(x^{\thst})_h =
\chi(g)\,x_g$ for a bijection $g\mapsto h$ of $G$ and signs
$\chi(g)\in\{\pm1\}$.  Thus $\|x^{\thst}\|_2=\|x\|_2$.

\begin{definition}
For a finite $\thst$-closed set $W\subseteq G$ let
\[
\mu^*(W)\;=\;\min\bigl\{\,\|w^{\thst}w\|_2\;:\;\|w\|_2=1,\
\supp(w)\subseteq W\,\bigr\}.
\]
\end{definition}

\begin{theorem}[Compactness]\label{thm:compact}
$\mu^*(W)>0$, and:
\begin{enumerate}
\item[(a)] every $u\in\RR[G]$ with $\supp(u)\subseteq W$ and
$u^{\thst}u=1$ satisfies $\|u\|_2\le \mu^*(W)^{-1/2}$;
\item[(b)] every real solution of
$E(y)=y+y^{\thst}+2\,y^{\thst}y=0$ with $\supp(y)\subseteq W$
satisfies $\|y\|_2\le 1/\mu^*(W)$.
\end{enumerate}
Consequently the set of \emph{integer} $\theta$-unitary elements
supported in $W$ is finite, and is exhausted by the exact height
ladder at height $\lfloor\mu^*(W)^{-1/2}\rfloor$.
\end{theorem}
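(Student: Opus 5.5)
The argument rests on compactness of the unit sphere $S_W=\{w:\ \supp(w)\subseteq W,\ \|w\|_2=1\}$ in the finite-dimensional space $\RR^W$, together with the fact that $\RR[G]$ has no zero divisors.

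\emph{Positivity of $\mu^*(W)$.} Multiplication is bilinear, and $\thst$ is linear, so the map $w\mapsto w^{\thst}w$ is a polynomial map from $\RR^W$ into the finite-dimensional space $\RR^{W^{\thst}W}$. Hence $w\mapsto\|w^{\thst}w\|_2$ is continuous on the compact set $S_W$, and the minimum in the definition is attained at some $w_0$. If $\mu^*(W)=0$, then $w_0^{\thst}w_0=0$. Since $w_0\neq 0$, the isometry property gives $w_0^{\thst}\neq 0$ as well. This contradicts the domain hypothesis (Kropholler--Linnell--Moody for $P$), so $\mu^*(W)>0$.

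\emph{Part (a).} By definition of $\mu^*$ and homogeneity, $\|w^{\thst}w\|_2\ge\mu^*(W)\|w\|_2^2$ for every $w$ supported in $W$. If $u^{\thst}u=1$, then $1=\|1\|_2=\|u^{\thst}u\|_2\ge\mu^*(W)\|u\|_2^2$, so $\|u\|_2\le\mu^*(W)^{-1/2}$.

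\emph{Part (b).} Let $y$ satisfy $E(y)=0$, so $2y^{\thst}y=-(y+y^{\thst})$. Taking norms and using $\|y^{\thst}\|_2=\|y\|_2$,
\[
2\mu^*(W)\|y\|_2^2\le\|y+y^{\thst}\|_2\le 2\|y\|_2 .
\]
If $y=0$ there is nothing to prove. Otherwise dividing by $2\|y\|_2$ gives $\|y\|_2\le 1/\mu^*(W)$.

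It is worth checking whether (b) is just (a) under the substitution $u=1+2y$. Indeed $u^{\thst}u=1+2(y+y^{\thst})+4y^{\thst}y=1+2E(y)$, which gives the link with Case A units $u\equiv 1\pmod 2$. However, $\supp(1+2y)$ lies in $W\cup\{1\}$ rather than $W$, so I would keep the direct argument above, which needs no assumption that $1\in W$.

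\emph{Finiteness and the height ladder.} Let $u$ have integer coefficients, $\supp(u)\subseteq W$, and $u^{\thst}u=1$. Then every coefficient satisfies $|u_g|\le\|u\|_\infty\le\|u\|_2\le\mu^*(W)^{-1/2}$. Since the coefficients are integers, the height is at most $\lfloor\mu^*(W)^{-1/2}\rfloor$. There are only finitely many integer vectors on the finite set $W$ with bounded entries, so the set is finite. It is also exactly the output of the height ladder at that height.

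No step here is a genuine obstacle. The one point needing care is justifying that the minimum is attained, which requires continuity of the product on a finite support and is immediate from bilinearity. The other is invoking the domain property rather than mere torsion-freeness; that is precisely where the hypothesis on $\RR[G]$ enters, and the $D_\infty$ contrast in the abstract shows it cannot be dropped.
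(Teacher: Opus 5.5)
Your proposal is correct and follows the paper's proof essentially line for line. Positivity of $\mu^*(W)$ comes from the minimum being attained on the compact sphere together with the domain property. Part (a) is the homogeneity chain $1=\|u^{\thst}u\|_2\ge\mu^*(W)\|u\|_2^2$, part (b) is the bound $2\mu^*(W)\|y\|_2^2\le\|y+y^{\thst}\|_2\le 2\|y\|_2$, and finiteness follows from $\|u\|_\infty\le\|u\|_2$. Your side remark is also correct: $\supp(1+2y)$ need not lie in $W$, so (b) is not literally (a) applied to $u=1+2y$, and keeping the direct argument is the right choice.
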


The proof (Section~\ref{sec:compact}) is elementary given the domain
property: if $\mu^*(W)=0$, a minimizing sequence on the compact unit
sphere of the window produces $w\ne 0$ with $w^{\thst}w=0$, a zero
divisor.  Part (a) is the homogeneity chain
$1=\|u^{\thst}u\|_2\ge\mu^*\|u\|_2^2$; part (b) uses in addition
$\|y+y^{\thst}\|_2\le 2\|y\|_2$.

Theorem~\ref{thm:compact} converts the Case A problem at any fixed
window into a \emph{finite} computation controlled by one real
constant.  Two supplementary results organize that computation.

\begin{theorem}[Depth tail, radius-free]\label{thm:tail}
Let $u$ be a nontrivial unit of $\ZZ[P]$ with $u\equiv \varepsilon g
\pmod 2$, written $u=\varepsilon g+2^m s$ with $s\not\equiv 0 \pmod
2$, and let $u^{-1}=\varepsilon g^{-1}+2^{m'}t$ with
$t\not\equiv 0\pmod 2$.  Then $m'=m$ and $\hgt(s)+\hgt(t)\ge 2^m$.
\end{theorem}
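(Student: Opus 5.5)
We work directly with the identity $uu^{-1}=1$ in $\ZZ[P]$ and compare $2$-adic valuations. Multiplying $u=\varepsilon g+2^m s$ with $u^{-1}=\varepsilon g^{-1}+2^{m'}t$ and using $\varepsilon^2=1$ gives
\[
0 \;=\; \varepsilon\,2^{m'}g t+\varepsilon\,2^m s g^{-1}+2^{m+m'}st .
\]
First we show $m'=m$. Suppose $m<m'$. Divide by $2^m$ and reduce modulo $2$: every term except $\varepsilon s g^{-1}$ vanishes, so $sg^{-1}\equiv 0\pmod 2$. Hence $s\equiv 0$, contradicting the hypothesis. The case $m'<m$ is symmetric: we get $gt\equiv0$, hence $t\equiv 0$. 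So $m'=m$. We also note that $m\ge1$ is forced by the form of $u$, and $u$ nontrivial means $s\neq0$.

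With $m'=m$, dividing by $2^m$ gives the key identity
\[
gt+sg^{-1} \;=\; -\varepsilon\,2^m\,st .
\]
Replacing $s$ by $s'=sg^{-1}$ and $t$ by $t'=gt$ changes neither heights nor supports, so we may write it as $t'+s'=-\varepsilon 2^m s't'$. Suppose $st\neq0$; this holds since $\RR[P]$ is a domain by Kropholler--Linnell--Moody and $s,t\ne0$. Then the right side is a nonzero element all of whose coefficients are divisible by $2^m$. So some coefficient of $s'+t'$ has absolute value at least $2^m$. By the triangle inequality, some group element $x$ satisfies $|s'_x|+|t'_x|\ge 2^m$, which gives $\hgt(s)+\hgt(t)\ge 2^m$.

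The one point to guard is the nonvanishing of $s't'$, since the domain property is exactly what excludes $s't'=0$ with $s',t'\neq0$. Everything else is valuation bookkeeping, which is why the bound is radius-free: no support information enters.

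An alternative reaches the same conclusion from norms, via $\|s'+t'\|_\infty\ge 2^m\|s't'\|_\infty\ge 2^m$, using integrality. This does not improve the bound, because the simple coefficient argument already gives it.
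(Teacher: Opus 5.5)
Your proof is correct and follows the paper's argument. You expand $uu^{-1}=1$, use a parity comparison to force $m'=m$, and combine the identity $gt+sg^{-1}=-\varepsilon 2^m st$ with the domain property; the only difference is that you argue directly ($st\neq 0$ makes the right side a nonzero multiple of $2^m$), whereas the paper argues by contrapositive (if $\hgt(s)+\hgt(t)<2^m$ then $gt+sg^{-1}=0$, hence $st=0$).
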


\begin{proposition}[Mirror symmetry]\label{prop:mirror}
With $E_{m,e}(s)=s+s^{\thst}+e\,2^m s^{\thst}s$ one has
$E_{m,e}(-s)=-E_{m,-e}(s)$: the strata $(m,e)$ and $(m,-e)$ are
isomorphic via $s\mapsto -s$, exactly and modulo every $2^K$.
\end{proposition}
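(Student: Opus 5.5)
The identity is a direct substitution, relying only on two facts about $\thst$. First, it is $\RR$-linear, so $(-s)^{\thst}=-s^{\thst}$; this holds because $\thst$ permutes coefficient axes up to the fixed signs $\chi(g)$. Second, the product is bilinear, so $(-s)^{\thst}(-s)=s^{\thst}s$.

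Substituting $-s$ into the definition then gives
\[
E_{m,e}(-s)=-s-s^{\thst}+e\,2^m s^{\thst}s .
\]
We compare this with
\[
-E_{m,-e}(s)=-s-s^{\thst}-(-e)\,2^m s^{\thst}s=-s-s^{\thst}+e\,2^m s^{\thst}s ,
\]
and the two expressions agree term by term. This proves the identity in $\RR[G]$, and hence also in $\ZZ[P]$.

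For the isomorphism of strata we read the stratum $(m,e)$ as the solution set $\{s: E_{m,e}(s)=0\}$. We also read it with whatever side conditions are imposed on $s$: support in a $\thst$-closed window $W$, $s\not\equiv 0\pmod 2$, and bounds on $\hgt(s)$.

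The map $s\mapsto -s$ is an involutive bijection of $\ZZ[P]$. It preserves all three side conditions:
\begin{itemize}
\item $\supp(-s)=\supp(s)$;
\item $-s\equiv s\pmod 2$, so $-s\not\equiv 0\pmod 2$ whenever $s\not\equiv0\pmod 2$;
\item $\hgt(-s)=\hgt(s)$.
\end{itemize}
By the identity, $E_{m,e}(-s)=0$ if and only if $E_{m,-e}(s)=0$. So negation carries the $(m,-e)$ stratum bijectively onto the $(m,e)$ stratum, and it is its own inverse.

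For the statement modulo $2^K$, we reduce the identity modulo $2^K$. Since it is a polynomial identity with integer coefficients, it holds in $(\ZZ/2^K)[P]$. Negation is still a bijection there and commutes with reduction. Therefore $E_{m,e}(-s)\equiv0\pmod{2^K}$ if and only if $E_{m,-e}(s)\equiv0\pmod{2^K}$.

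The only point needing care, and it is minor, is that $\thst$ is linear rather than merely additive up to the twist. This follows from its description as a signed permutation of coordinates, so no real obstacle is expected.
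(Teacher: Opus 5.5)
Your proof is correct and is essentially the paper's: the same one-line substitution using $(-s)^{\thst}=-s^{\thst}$ and $(-s)^{\thst}(-s)=s^{\thst}s$, followed by the check that $s\mapsto -s$ preserves supports, heights and parities. The paper also records that negation carries the trivial shadow $e\delta_1$ of stratum $(m,-e)$ to the trivial shadow $-e\delta_1$ of stratum $(m,e)$; at the level of units this is $-e+2^m s\mapsto e-2^m s$, i.e.\ $u\mapsto -u$. You should add this so that your bijection restricts to \emph{nontrivial} solutions, which is how the proposition is used in Theorem~\ref{thm:ladder}.
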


Together with a \emph{master-cell reduction} (every stratum embeds in
the single cell $u=1+2y$, $y$ unconstrained mod~$2$), the machine side
collapses to one family of exact SAT instances.  Our computational
results at $W=B(4)$ (Section~\ref{sec:machine}):

\begin{theorem}[Machine, exact; kissat/glucose, control-checked]
\label{thm:ladder}
There is no nontrivial $\theta$-unitary $u\equiv\pm1\pmod 2$ with
$\supp(u)\subseteq B(4)$ and stratum height $\hgt(s)\le 31$
(all strata $m\le 5$; $e=-1$ via Proposition~\ref{prop:mirror}).
Moreover the stratum $m=2$ is empty at \emph{all} heights
(DRAT-certified, both signs), and the master cell is empty for
$\hgt((u-1)/2)\le 127$ (DRAT-certified), indeed for
$\hgt((u-1)/2)\le 255$ at solver grade (kissat UNSAT, no proof
logged; Section~\ref{sec:machine}).
\end{theorem}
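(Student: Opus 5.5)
This is a machine theorem, so the plan is to reduce each clause to a finite family of Boolean satisfiability instances, decide them with kissat/glucose, and control-check the encoding.

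\textbf{Stratum encoding.} Fix the window $B(4)$ of $P$, enumerated explicitly as a finite set of group elements with multiplication table restricted to $B(4)\cdot B(4)$. A Case~A unit $u\equiv\pm1\pmod 2$ that is $\theta$-unitary is written $u=\varepsilon+2^m s$. The relation $u^{\thst}u=1$ then becomes $E_{m,e}(s)=0$ after dividing by $2^m$, with $e=\pm1$ determined by $\varepsilon$ and the sign conventions. By Proposition~\ref{prop:mirror} it suffices to treat $e=+1$, since $s\mapsto -s$ transports solutions between the strata. For each $m\le 5$ I encode the coefficients $s_g$, $g\in B(4)$, as bounded integers $|s_g|\le 31$ in two's-complement bit-vectors, with the extra condition $s\not\equiv0\pmod 2$. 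The equation $E_{m,e}(s)=0$ is encoded coefficientwise over $B(4)\cdot B(4)$ as exact integer arithmetic: each product $s^{\thst}_a s_b$ becomes a bit-multiplier, sums are built with adder trees, and bit widths are chosen so that no overflow can occur at height $31$. Nontriviality ($s\neq0$, or $u\neq\pm g$) is added as a clause. Each resulting CNF is then shown UNSAT.

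\textbf{Unbounded-height clauses.} For the master cell $u=1+2y$ the same construction applies with height bounds $127$ and $255$. At $127$ DRAT proofs are emitted and checked with drat-trim; at $255$ only solver UNSAT is recorded. For ``stratum $m=2$ empty at all heights'' I would not bound the height. Instead I would use a modular tower: reduce $E_{2,e}(s)\equiv0$ modulo $2^K$ with $s$ ranging over residues mod $2^K$ and $s\not\equiv0\pmod2$, and find a $K$ for which this is UNSAT, DRAT-certified for both signs. Any integer solution at any height reduces to a residue solution, so UNSAT at one $K$ excludes all heights. A companion argument is the depth-tail Theorem~\ref{thm:tail}, which forces $\hgt(s)+\hgt(t)\ge 4$; it constrains nothing by itself, so the congruence obstruction is the real content of this clause.

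\textbf{Control checks and main obstacle.} The encoding is validated by feeding it known solutions: trivial units, and the $\theta$-anti-unitary trivial elements of $P$. These should be SAT when the corresponding constraints are relaxed. A second check is that, over $\mathbb F_2$ with reduction mod 2, the encoding recovers Gardam's unit, which confirms the multiplication table and the involution. The main obstacle is the $m=2$ all-heights clause. It requires a modulus $K$ at which the residue problem actually closes. Because of the congruence towers mentioned in the introduction, generic strata never close modulo $2^K$, so this clause depends on stratum-specific structure. My first attempt would be to increase $K$ incrementally and add symmetry breaking under the automorphisms of $B(4)$ preserving $\thst$, to keep the DRAT proofs tractable.
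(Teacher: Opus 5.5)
Your pipeline matches the paper's. It uses two's-complement SAT encodings of $E_{m,e}(s)=0$ per stratum, runs $e=+1$ only and gets $e=-1$ from Proposition~\ref{prop:mirror}, and uses a residue-mod-$2^K$ encoding for $m=2$; the paper closes that stratum at $K=15$, running both signs directly under DRAT. It also uses the master cell $u=1+2y$, checked with \texttt{drat-trim} at $H=127$ and at solver grade at $H=255$.

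\textbf{The gap is in the first clause.} That clause covers \emph{all} nontrivial Case~A $\theta$-unitaries with $\hgt(s)\le 31$. You build instances only for $m\le 5$ and never explain why no solution has $m\ge 6$ and $\hgt(s)\le 31$. Since $m$ is unbounded, no finite family of ladder instances settles this on its own.

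The missing step is the corollary to Theorem~\ref{thm:tail}. If $u=\varepsilon+2^m s$ is $\theta$-unitary, then $u^{-1}=u^{\thst}=\varepsilon+2^m s^{\thst}$. So the depth-tail theorem applies with $m'=m$, $t=s^{\thst}$ and $\hgt(t)=\hgt(s)$. This gives $2\,\hgt(s)\ge 2^m$, i.e.\ $\hgt(s)\ge 2^{m-1}$, and at $\hgt(s)\le 31$ it forces $m\le 5$. That is exactly why the paper's ladder stops at $m=5$. You invoke Theorem~\ref{thm:tail} only at $m=2$, where it indeed gives almost nothing, and so miss its real role: it makes the stratum quantifier finite at every finite height.

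A secondary point concerns your controls. Trivial units, the $\theta$-anti-unitary trivial elements and Gardam's unit reduced mod~$2$ do test the multiplication table and the involution. But they only exercise coefficients in $\{0,\pm1\}$ or arithmetic mod~$2$. So they cannot expose a fault in the high-order bits of the adders and multipliers, which is exactly where an UNSAT verdict at height $31$ or $127$ could be spurious.

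The paper's positive control is a $D_\infty$ instance run through the same machinery. It must re-find the unbounded unipotent family of twisted unitaries at every height, so it exercises the full bit width. The paper adds three further checks:
\begin{itemize}
\item exact-integer re-verification of any model;
\item an independent $e=\pm1$ mirror self-check at $H=16$;
\item a master-cell instance with the nontriviality blocks removed, which must be satisfiable exactly by $y\in\{0,-\delta_1\}$.
\end{itemize}
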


\begin{corollary}[Reduction to one constant]\label{cor:onecst}
If $\mu^*(B(4))\ge (2H-1)^{-2}$ for some height $H$ covered by
Theorem~\ref{thm:ladder}, then the only $\theta$-unitary units
$u\equiv\pm1\pmod 2$ of $\ZZ[P]$ with $\supp(u)\subseteq B(4)$ are
$u=\pm1$.
\end{corollary}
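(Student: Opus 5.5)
We need to prove Corollary \ref{cor:onecst}. Let $u$ be a $\theta$-unitary unit with $u\equiv\pm1\pmod 2$ and $\supp(u)\subseteq B(4)$, and suppose $u\neq\pm1$. Replacing $u$ by $-u$ changes neither unitarity nor the support, so we may assume $u\equiv 1\pmod 2$; this is also the mirror symmetry of Proposition~\ref{prop:mirror}. Write $u=1+2y$ with $y\in\ZZ[P]$ and $\supp(y)\subseteq B(4)$; then $y$ lies in the master cell and $y\neq0$. Expanding $u^{\thst}u=1$ gives $1+2(y+y^{\thst})+4y^{\thst}y=1$, that is,
\[
E(y)=y+y^{\thst}+2\,y^{\thst}y=0 .
\]
Here we use that $\thst$ fixes $1$ and that $B(4)$ is $\thst$-closed, so Theorem~\ref{thm:compact} applies to $W=B(4)$.

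Theorem~\ref{thm:compact}(b) gives $\|y\|_2\le 1/\mu^*(B(4))$. Part (a) is sharper here: $\|u\|_2\le\mu^*(B(4))^{-1/2}\le 2H-1$. The key step is converting this $\ell^2$ bound into a height bound on $y$. Since $y$ has integer coefficients, $\hgt(y)\le\|y\|_\infty$. Coefficientwise we have $u_g=\delta_{g,1}+2y_g$, so
\[
2|y_g|\le |u_g|+\delta_{g,1}\le \|u\|_2+1\le 2H,
\]
which gives $\hgt(y)\le H$. Thus $u$ is a nontrivial solution in the master cell of height at most $H$. By the master-cell reduction, $u$ would also have to appear in its stratum $(m,e)$, with $u=1+2^m s$ and stratum height $\hgt(s)\le 2^{1-m}\hgt(y)\le H$. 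Either description is excluded by Theorem~\ref{thm:ladder} for a covered $H$: the stratum ladder covers $H\le31$, and the master cell covers height $\le 127$ (or $\le 255$). This contradiction proves $u=\pm1$.

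The main obstacle is getting the bookkeeping to match the stated threshold $(2H-1)^{-2}$ exactly. The exponent $-2$ indicates that the author uses part (a), not the $1/\mu^*$ bound of part (b). I would first check that the extra $1$ at the identity coefficient is absorbed as above, which is where the $-1$ in $2H-1$ comes from. I would also check that ``covered'' consistently refers to the master-cell height $\hgt((u-1)/2)$ or to stratum heights, using $\hgt(s)\le\hgt(y)$ when $m\ge1$. A minor point is that Theorem~\ref{thm:compact} assumes $\RR[P]$ is a domain, which is supplied by Kropholler--Linnell--Moody as noted in the paper.
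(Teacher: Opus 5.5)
Your argument is correct and is the route the paper intends; Section~\ref{sec:mu} states the same requirement, machine height $\ge(\mu^*(B(4))^{-1/2}+1)/2$. Theorem~\ref{thm:compact}(a) gives $\|u\|_\infty\le\|u\|_2\le 2H-1$, hence $\hgt((u-1)/2)\le H$ and every stratum height is at most $H$, so Theorem~\ref{thm:ladder} applies; your preliminary replacement $u\mapsto -u$ is harmless but unnecessary, since $-1\equiv 1\pmod 2$ already places every Case~A unit in the master cell $u=1+2y$.
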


Numerically $\mu^*(B(4))\approx 1.4\times 10^{-3}$ (a rigorous
\emph{upper} bound obtained by descent; Section~\ref{sec:mu}), whereas
the ladder height $H=31$ requires only $\mu^*\ge 2.7\times10^{-4}$ and
the DRAT master cell relaxes this further.  Section~\ref{sec:sos}
describes the verified-numerics sum-of-squares pipeline for the
remaining certified lower bound, together with a structural negative
result: no \emph{linear} dual certificate exists, because $P$ admits
$\theta$-\emph{anti}-unitary trivial elements ($\sigma$-fixed $g$ with
$\chi(g)=-1$, $g^{\thst}g=-1$), which force every linear functional of
$w^{\thst}w$ to be indefinite.  The certification is therefore
genuinely a degree-4 (SOS) problem.

\subsection*{Relation to prior work}

The unit conjecture originates with Higman \cite{Higman1940} and
Kaplansky; $P$ is Promislow's example of a torsion-free group without
the unique-product property \cite{Promislow1988}.  Gardam's
counterexample over $\mathbb{F}_2$ \cite{Gardam} was extended by
Murray to all positive characteristics \cite{Murray2021}, and Gardam
subsequently produced nontrivial units of $\mathbb{C}[P]$ with
coefficients in $\ZZ[\zeta_8]$ \cite{GardamComplex2023}; the integral
and rational cases remain open (see also \cite{Passman2021}).  The
disproof has been formally verified in Lean
\cite{GadgilTadipatri2024}.  The twisted-unitary framing of all these
units is Bartholdi's observation \cite{Bartholdi2022}, and the
congruence (mod-$4$) theory of the sector at $B(4)$ is developed in
\cite{paper5}.

A finite-window unit search must tame two a priori infinite
directions.  For the \emph{support of the inverse}, Friedman,
Gustavson and Pappas proved that $K[G]$ has ``property (U)'' ---
$\supp(u)\subseteq X$ forces $\supp(u^{-1})\subseteq Y(X)$ for a
finite $Y(X)$ --- over every field $K$, whenever $G$ has an abelian
subgroup of finite index \cite[Thm.~2.3]{FGP1995}; their finiteness
step is a proof by contradiction and yields neither a construction nor
a bound.  Craven and Pappas state the conclusion for $P$ explicitly
\cite[Thm.~14.3]{CravenPappas2013}, and their length-symmetry and
determinant machinery remains the deepest structural work on this
group over field coefficients; an effective box $Y(B(4))$ for $P$ is
given in \cite{paper6}.  See also \cite{Pappas1988} on supports of
units.  The direction removed by Theorem~\ref{thm:compact} is the
orthogonal one --- the \emph{height} (coefficient-size) direction ---
where we know of no prior bound, effective or not, in any sector of
the integral problem.  For the untwisted involution the bound is
classical and trivial (Remark~\ref{rem:untwisted}); the content here
is the uniform quantification over the twist.

Exhaustive radius-limited searches over $\mathbb{F}_2$ are due to
Dietrich, Lee, Nies and Vinyals \cite{DLNV2026}.  Unitary elements of
group rings over fields, for abstract involutions, are studied by
Claramunt and Grabowski \cite{ClaramuntGrabowski2023}; their criteria
are field-theoretic and orthogonal to the archimedean mechanism used
here.  On the machine side our pipeline uses \textsc{kissat}
\cite{kissat} and \textsc{glucose} \cite{glucose} via PySAT
\cite{pysat}, with proofs logged in DRAT format and checked by
\texttt{drat-trim} \cite{DRATtrim}; the certification framework of
Section~\ref{sec:sos} is the sum-of-squares / moment machinery of
Lasserre and Parrilo \cite{Lasserre2001,Parrilo2003}, with rigorous
positive semidefiniteness in the sense of Rump \cite{Rump2006}.

\subsection*{The mechanism, and the $D_\infty$ contrast}

The compactness constant is exactly the quantity that separates $P$
from the infinite dihedral group: $\ZZ[D_\infty]$ contains torsion,
hence zero divisors, hence nilpotents $z$ with $z^2=0$; along such
directions $\|w^{\thst}w\|_2$ vanishes, $\mu^*=0$, and indeed
$\ZZ[D_\infty]$ carries \emph{unbounded} unipotent families
$\pm t(1+z)$ of twisted unitaries --- the very families that our
ladder's control instance re-discovers at every height.  Numerically,
descent on the $D_\infty$ window drives $\|w^{\thst}w\|_2$ to
$10^{-6}$ and below, while on $P$'s window it stalls at
$\approx 1.4\times10^{-3}$: torsion-freeness is quantitatively visible
as a spectral floor.

\subsection*{Relation to congruence towers}

The measured growth of minimal heights along towers modulo $2^K$
(``height-growth laws'') is, in hindsight, fully explained by the
exactness threshold of \cite{paper5}: any congruence solution that is
not exact has height at least $c\,2^{K/2}$.  Hence the entire content
of the congruence tower is [exact ladder] $\times$ [threshold], and a
proof of the height-growth law for all $K$ is \emph{equivalent} to the
emptiness of the corresponding stratum.  Theorem~\ref{thm:compact}
replaces that infinite family of statements by one constant.

\section{The compactness theorem}\label{sec:compact}

\begin{proof}[Proof of Theorem~\ref{thm:compact}]
The unit sphere of $\{w:\supp(w)\subseteq W\}$ is compact and
$w\mapsto \|w^{\thst}w\|_2$ is continuous, so the minimum
$\mu^*(W)$ is attained, say at $w_0$.  If $\mu^*(W)=0$ then
$w_0^{\thst}w_0=0$ with $w_0\ne 0$ and $w_0^{\thst}\neq0$, zero
divisors in $\RR[G]$ --- contradicting the domain property (for
$G$ torsion-free elementary amenable this is
Kropholler--Linnell--Moody \cite{KLM}).

(a) By degree-2 homogeneity of $w\mapsto w^{\thst}w$,
$\|u^{\thst}u\|_2\ge\mu^*\|u\|_2^2$ for every $u$ supported in $W$;
if $u^{\thst}u=1$ the left side is $1$.

(b) If $E(y)=0$ then $y^{\thst}y=-(y+y^{\thst})/2$, so
$\mu^*\|y\|_2^2\le\|y^{\thst}y\|_2=\tfrac12\|y+y^{\thst}\|_2\le
\|y\|_2$, using that $\thst$ is an $\ell^2$ isometry.

Finiteness of integer points: an integer $\theta$-unitary $u$
supported in $W$ has $\|u\|_\infty\le\|u\|_2\le\mu^*(W)^{-1/2}$, a
finite box.
\end{proof}

\begin{remark}\label{rem:general}
The argument uses nothing about $P$ beyond the domain property of
$\RR[P]$ and the signed-permutation structure of $\thst$; it applies
verbatim to every torsion-free elementary amenable group, every
finite window, and every twisted involution of the stated form ---
in particular to all larger windows $B(r)$ of $P$, with
$\mu^*(B(r))$ nonincreasing in $r$ by inclusion of windows.
Quantifying its decay is Problem~\ref{prob:decay}.  We also note that
$\mu^*(W)$ is a real algebraic number --- the minimum of a polynomial
function on the unit sphere, a compact semialgebraic set --- so it is
computable in principle by quantifier elimination; the effective
enumerability asserted in Theorem~\ref{thm:compact} is meant in this
in-principle sense, and the practical certification at $W=B(4)$ is
exactly Problem~\ref{prob:cert}.
\end{remark}

\begin{remark}[Two calibrations]\label{rem:untwisted}
(i) For the \emph{untwisted} involution $(x^*)_g=x_{g^{-1}}$ one has
$(w^*w)_1=\|w\|_2^2$, hence $\mu^*(W)=1$ for every window (attained at
$w=\delta_g$), and Theorem~\ref{thm:compact} degenerates to the
classical fact that untwisted $*$-unitaries of $\ZZ[G]$ are trivial.
The constant $\mu^*$ measures exactly the damage done by the sign
character $\chi$, which makes the identity coefficient of $w^{\thst}w$
indefinite (Section~\ref{sec:sos}).  (ii) The theorem bounds
\emph{norms}; what makes the set of integer points finite is that
$\ZZ$ is \emph{discrete} in $\RR$.  No conclusion follows for
coefficients in a dense subring, and none must: Gardam's nontrivial
units of $\mathbb{C}[P]$ have coefficients in $\ZZ[\zeta_8]$
\cite{GardamComplex2023}, which is dense in $\mathbb{C}$, so the
compactness route is consistent with their existence --- it confines
them to a norm ball, where infinitely many $\ZZ[\zeta_8]$-points
live.  Any proposed argument against integral units must pass exactly
this test, and this one does so structurally.
\end{remark}

\section{Depth tail and mirror symmetry}\label{sec:tail}

\begin{proof}[Proof of Theorem~\ref{thm:tail}]
From $uv=1$, expanding $u=\varepsilon g+2^ms$,
$v=\varepsilon g^{-1}+2^{m'}t$: reducing modulo $2^{\min(m,m')+1}$
forces $m'=m$ (otherwise the odd part of $s$ or of $t$ produces a
parity contradiction), and then
$uv-1=2^m(\varepsilon gt+\varepsilon sg^{-1}+2^mst)=0$, hence
$gt+sg^{-1}\equiv 0 \pmod{2^m}$ \emph{coefficientwise}.  If
$\hgt(s)+\hgt(t)<2^m$, every coefficient of $gt+sg^{-1}$ has absolute
value below $2^m$, so $gt+sg^{-1}=0$ exactly; substituting back gives
$2^{2m}st=0$, so $st=0$, and the domain property of $\ZZ[P]$
\cite{KLM} forces $s=0$ or $t=0$, contradicting nontriviality.
\end{proof}

\begin{corollary}
For $\theta$-unitary $u$ in Case A ($u\equiv\varepsilon\pmod 2$,
$g=1$): here $u^{-1}=u^{\thst}=\varepsilon+2^m s^{\thst}$, so $t=
s^{\thst}$ exactly (an anti-automorphism fixes $1$, whence
$\chi(1)=1$) and $\hgt(t)=\hgt(s)$; therefore $\hgt(s)\ge 2^{m-1}$.
Hence searches up to height $H$ need only strata $m\le \log_2 H+1$:
the stratum quantifier is finite for every finite height.
\end{corollary}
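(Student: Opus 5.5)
The plan is to specialize Theorem~\ref{thm:tail} to $g=1$ and then use the $\theta$-unitarity relation $u^{-1}=u^{\thst}$ to identify the inverse tail $t$ explicitly. Write $u=\varepsilon+2^m s$ with $s\not\equiv 0\pmod 2$.

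\textbf{Step 1: compute $u^{\thst}$.} Since $\thst$ is an additive anti-automorphism, $u^{\thst}=\varepsilon\,1^{\thst}+2^m s^{\thst}$. I would first check that $1^{\thst}=1$. An anti-automorphism of $\RR[G]$ is unital, because it sends the identity to a two-sided identity. By the signed-permutation description, $(1^{\thst})_h=\chi(1)$ at the image $h$ of $1$. Unitality then forces that image to be the identity with $\chi(1)=1$. Hence $u^{-1}=u^{\thst}=\varepsilon+2^m s^{\thst}$.

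\textbf{Step 2: identify $t$ and its height.} Since $\thst$ permutes coefficients up to sign, $s^{\thst}$ is an integer element with the same multiset of coefficient absolute values as $s$. So $s^{\thst}\not\equiv 0\pmod 2$ and $\hgt(s^{\thst})=\hgt(s)$. This matches the form $u^{-1}=\varepsilon g^{-1}+2^{m'}t$ of Theorem~\ref{thm:tail} with $g=1$, $m'=m$ and $t=s^{\thst}$. The exponent $m'$ is determined as the $2$-adic valuation of $u^{-1}-\varepsilon$, so this choice is the only one possible.

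\textbf{Step 3: apply the tail theorem.} For nontrivial $u$ (so $s\neq0$), Theorem~\ref{thm:tail} gives $2\,\hgt(s)=\hgt(s)+\hgt(t)\ge 2^m$, that is, $\hgt(s)\ge 2^{m-1}$. If the search height satisfies $\hgt(s)\le H$, then $2^{m-1}\le H$, so $m\le \log_2 H+1$. Only finitely many strata therefore need to be examined.

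\textbf{Main obstacle.} The only subtle point is justifying $\chi(1)=1$ and that the bijection fixes $1$, which I handle by unitality as in Step~1. I would also note that $u$ being a unit with $u\equiv\varepsilon\pmod 2$ is exactly the hypothesis of Theorem~\ref{thm:tail} with $g=1$. Everything else is bookkeeping.
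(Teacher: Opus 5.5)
Your proposal is correct and follows the paper's own argument. You use $u^{-1}=u^{\thst}$ and $1^{\thst}=1$ to identify $t=s^{\thst}$ with $m'=m$, note that the signed-permutation structure gives $\hgt(t)=\hgt(s)$, and apply Theorem~\ref{thm:tail} to get $2\,\hgt(s)\ge 2^m$, hence $m\le\log_2 H+1$. Your unitality argument for $1^{\thst}=1$ and your remark that $m'$ is forced as the $2$-adic valuation of $u^{-1}-\varepsilon$ simply make explicit what the paper leaves implicit.
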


\begin{proof}[Proof of Proposition~\ref{prop:mirror}]
$E_{m,e}(-s)=-s-s^{\thst}+e2^m s^{\thst}s=-E_{m,-e}(s)$.  Supports,
heights, parities and the trivial shadows $s=-e\delta_1$ are
preserved by $s\mapsto-s$.
\end{proof}

\begin{remark}[A bookkeeping lesson]
Mirror symmetry is also a bug detector: any $e$-asymmetric verdict at
equal $(m,K,H)$ indicates an encoder fault.  An earlier claim that the
stratum $(m,e)=(1,-1)$ closes modulo~8 --- retracted once before ---
resurfaced in our own working notes and is refuted by the mirror
lemma together with a re-run on the fixed encoder; we record this as
a caution on the archaeology of long-running machine campaigns.
\end{remark}

\section{Machine results at $B(4)$}\label{sec:machine}

The exact ladder encodes, for each stratum $(m,e)$ and height cap
$H$, the coefficientwise equation $E_{m,e}(s)=0$ over $\ZZ$ in binary
two's-complement arithmetic, with (i) a $D_\infty$ \emph{control
instance} on which the same machinery must re-find the unipotent
family (it does, at every height), (ii) exact integer re-verification
of any SAT model, and (iii) DRAT proof logging and \texttt{drat-trim}
verification on the certified rungs.  The master cell additionally
uses the reduction $u=1+2y$: since the cell $(m,e)=(1,+1)$ imposes no
parity on $s$, it subsumes every stratum and both signs.

Highlights: per-stratum ladder UNSAT through height $31$ (strata
$m\le 5$, which suffices by the corollary to Theorem~\ref{thm:tail};
up to $4.3$M variables and $14.7$M clauses per cell at $H=31$;
\textsc{glucose}, exact-integer model re-verification protocol);
stratum $m=2$ empty at all heights modulo $2^{15}$ with two's
complement covering all residues (DRAT: both signs run directly,
\texttt{drat-trim} \texttt{s VERIFIED} in $3309$\,s and $2313$\,s);
master cell at height $H_{\mathrm{mc}}=127$: \textsc{kissat} UNSAT in
$926$\,s ($5.9$M variables, $20.4$M clauses, DRAT proof $1.1$\,GB,
\texttt{drat-trim} \texttt{s VERIFIED} in $19567$\,s).  A subsequent
run extends the master cell to height $255$ ($7.0$M variables,
$24.1$M clauses, \textsc{kissat} UNSAT in $535$\,s) at \emph{solver
grade}, i.e.\ without proof logging; we keep the two grades separate
throughout.  The mirror self-check (running $e=\pm1$ independently at
$H=16$ and comparing verdicts) passed on all strata.

All instances carry two health controls: a $D_\infty$ instance on
which the machinery must re-find the unipotent family (positive
control, checked at every run), and a master-cell instance with the
nontriviality blocks removed, which must be satisfiable exactly by the
trivial solutions $y\in\{0,-\delta_1\}$ (encoder-health control).
The CNF generators, gate scripts, verification certificates and run
summaries for every claim in this section (and for
Sections~\ref{sec:mu}--\ref{sec:sos}) are included as ancillary files
with this submission; full solver logs are retained by the author and
available on request.

\section{The constant $\mu^*(B(4))$: numerics}\label{sec:mu}

Projected-gradient descent with $1500$ random restarts, $300$
basin-hopping steps around the incumbent, and a long polish ($1800$
local minima in total) gives
\[
\mu^*(B(4))\;\le\;1.425\times10^{-3},\qquad
\mu^*(B(4))^{-1/2}\;\le\;26.5,
\]
with the deep minimum reached from fewer than $0.1\%$ of the starts
(the global valley is narrow).  The displayed bound is rigorous, not
merely numerical: the stored minimizer is a vector of dyadic
rationals, and the homogeneous ratio
$\|w^{\thst}w\|_2^2/\|w\|_2^4=2.0302\times10^{-6}\le(1.425\times
10^{-3})^2$ is evaluated in exact rational arithmetic.  If the numerical value is close to the
truth, the requirement on the machine side is height
$\ge(\mu^{*-1/2}+1)/2\approx 13.7$: the certified ladder height $31$
of Theorem~\ref{thm:ladder} exceeds it by a factor $2.3$, the DRAT
master cell ($H=127$) by $9.2$, and the solver-grade master cell
($H=255$) by $18.6$.  Equivalently, the thresholds that
Corollary~\ref{cor:onecst} attaches to the three machine heights
(solver-grade ladder, DRAT master cell, solver-grade master cell) are
$\mu^*\ge 61^{-2}=2.7\times10^{-4}$ at $H=31$,
$\mu^*\ge 253^{-2}=1.6\times10^{-5}$ at $H=127$, and
$\mu^*\ge 509^{-2}=3.9\times10^{-6}$ at $H=255$; the numerical value
sits factors $5.3$, $91$ and $369$ above them.  On the larger window, monotonicity gives
$\mu^*(B(5))\le\mu^*(B(4))$; a short independent descent on $B(5)$
($\sim150$ restarts) reached only $4.8\times10^{-3}$ and is not
comparable in depth to the $B(4)$ campaign --- we record it as absence
of evidence of fast decay, not as evidence of slow decay.  We
emphasize the logical status throughout: descent yields an
\emph{upper} bound on $\mu^*$; the theorem consumes a \emph{lower}
bound, which is the object of the next section.

\section{Certifying the lower bound: obstructions at level 2}
\label{sec:sos}

Write $q(w)=\|w^{\thst}w\|_2^2=\sum_c f_c(w)^2$, a sum of $525$
squares of integer quadratic forms on $\RR^{83}$.  The goal is a
certificate $q(w)\ge\delta^2$ on the unit sphere with
$\delta\ge(2H-1)^{-2}$.  We report three structural findings from an
extensive certification campaign; the certificate itself remains open
(Problem~\ref{prob:cert}).

\emph{No linear certificate exists.}  For any $\lambda$,
Cauchy--Schwarz gives $q(w)\ge (w^TA_\lambda w)^2/\|\lambda\|^2$ with
$A_\lambda=\sum_c\lambda_cA_c$, so a definite $A_\lambda$ would
suffice; but the diagonal of $A_\lambda$ at a $\sigma$-fixed basis
vector $g$ has sign $\chi(g)\lambda_{c(1)}$, and $P$ has
$\sigma$-fixed elements of both signs of $\chi$ ---
$\theta$-\emph{anti}-unitary trivial elements with $g^{\thst}g=-1$
(in $B(4)$: $17$ $\sigma$-fixed elements, $11$ with $\chi=+1$ and $6$
with $\chi=-1$).  Hence every $A_\lambda$ is indefinite.  Pairs of
forms give nothing more: a two-form certificate needs
$A_\lambda,A_\mu$ with no common zero on the sphere; the joint range
$\{(w^TA_\lambda w,\,w^TA_\mu w):\|w\|_2=1\}$ is convex by Brickman's
theorem \cite{Brickman1961}, so avoiding the origin would yield a
separating line and make some $A_{\alpha\lambda+\beta\mu}$ definite
--- a contradiction.  The certificate must be genuinely quartic.

\emph{Level-2 SOS is boundary-pinned, with an explicit slope.}
Running feasibility splittings (alternating projections,
Douglas--Rachford) and Polyak-target supergradient ascent on
$\lambda_{\min}$ over the Gram spectrahedron of
$q-\delta^2\|w\|_2^4$ (pair basis of dimension $3486$; the affine
projection is closed-form because the quartic coefficient constraints
have disjoint supports), the achievable minimum eigenvalue plateaus at
$\lambda_{\min}\approx -5.3\,\delta^2$ (supergradient ascent at
$\delta^2=10^{-9}$); at $\delta^2=0$ the same ascent reaches
$\lambda_{\min}=-6\times10^{-14}$ --- the boundary itself, but no
positive-definite point; and the cone-distance of the projection
iterates scales linearly in $\delta^2$ ($\approx 44\,\delta^2$ at both
tested values, $\delta^2=7.2\times10^{-8}$ and $10^{-9}$).  Thus,
numerically, $q$ lies \emph{on the boundary} of the SOS cone --- its
Gram spectrahedron admits no positive-definite point --- and
$q-\delta^2\|w\|_2^4$ appears to be non-SOS for every $\delta^2>0$,
even though the descent minimum of $q$ on the sphere is
$\approx2\times10^{-6}>0$ (the square of the value reported in
Section~\ref{sec:mu}).  Since $q$ has no real projective zeros (a zero
would be a zero divisor), the pinning face is \emph{spurious}: a
degree-4 pseudo-moment matrix, not a measure.

\emph{Ideal multipliers do not unpin it.}  The direction $w=u/\|u\|_2$
of a $\theta$-unitary satisfies the $524$ quadratic equations
$f_a(w)=0$ ($a$ not the identity class), so it suffices to certify
$f_1^2\ge\varepsilon\|w\|^4$ on that variety, with free quadratic
multipliers $h_a$ absorbing the ideal:
$f_1^2-\varepsilon\|w\|^4+\sum_a h_af_a\in\mathrm{SOS}$.  This adds
$\sim1.45$ million linear degrees of freedom, and the affine
projection remains exact (conjugate gradients, residual
$\sim10^{-16}$); yet the ascent plateaus again at
$\lambda_{\min}\approx-5\,\varepsilon$ ($-5.1\,\varepsilon$ and
$-4.5\,\varepsilon$ at the two tested values
$\varepsilon=100^{-4}$ and $45^{-4}$).  The pinning pseudo-moment is thus
orthogonal to the entire degree-4 truncation of the ideal --- yet it
cannot be a measure, because on the variety $f_1$ never vanishes
(domain property) while the pseudo-moment assigns $f_1^2$ the value
zero.  A restricted degree-6 layer (Gram on $166$ structured cubics)
already meets a combinatorial wall: the products span $2.2\times10^7$
degree-6 monomials.

These findings delimit the remaining work precisely: either a
spectral-bundle computation deciding the exact level-2 optimum, a
targeted partial level-3 killing the exhibited pseudo-moment in a
residual-absorbed (rather than coefficient-matched) formulation, or
symmetry-reduced/interval methods aimed directly at the much-relaxed
target $\mu^*\ge(2H-1)^{-2}$ with $H=127$ (DRAT grade) or $H=255$
(solver grade).  Any bound produced by such a computation is to be
finished rigorously: rational or interval arithmetic on the final Gram
matrix, with positive semidefiniteness verified in floating point by
Rump's criterion \cite{Rump2006}.

\section{Open problems}\label{sec:problems}

\begin{problem}\label{prob:cert}
Certify $\mu^*(B(4))\ge(2H-1)^{-2}$ for some $H\le 127$, i.e.\
$\mu^*(B(4))\ge 1.6\times10^{-5}$ (the machine side at this height is
DRAT-complete; $H\le 255$, i.e.\ $3.9\times10^{-6}$, suffices at
solver grade --- factors $91$ and $369$ below the numerical value; see
Sections~\ref{sec:machine}--\ref{sec:sos} for the level-2 obstructions
any method must clear).  More ambitiously, determine $\mu^*(B(4))$
exactly: it is a real algebraic number (Remark~\ref{rem:general}) ---
is its degree small?
\end{problem}

\begin{problem}\label{prob:decay}
Quantify the decay of $\mu^*(B(r))$ as $r\to\infty$ for $P$ (and for
general torsion-free elementary amenable groups).  A subexponential
decay would make the compactness route a practical all-radii
program for the $\theta$-unitary sector.
\end{problem}

\begin{problem}
The compactness bound is an archimedean, upper-bound counterpart of
the (vacuous) operator-norm lower bounds of potential-theoretic type.
Is there a common refinement that sees both the $2$-adic depth
(Theorem~\ref{thm:tail}) and the archimedean floor $\mu^*$?
\end{problem}

\end{document}